\documentclass[preprint,12pt]{elsarticle}

\usepackage{amsfonts,amssymb,amsmath,amsgen,amsopn,amsbsy,graphicx,epsfig,amsthm}
\usepackage{amscd,bezier,latexsym,mathrsfs,enumerate,multirow}
\usepackage{physics}

\newtheorem{theorem}{Theorem}[section]

\newtheorem{definition}[theorem]{Definition}
\newtheorem{example}[theorem]{Example}

\newtheorem{lemma}[theorem]{Lemma}

\newcommand{\intP}{\text{int} (\mathcal{P})}
\renewcommand{\natural}{\mathbb N}

\journal{Bulletin des sciences math\'ematiques}

\begin{document}

\begin{frontmatter}

\title{Best Approximations on Quasi-Cone Metric Spaces}

\author{Ahmad Hisbu Zakiyudin, Kistosil Fahim*, Nur Millatul Af-Idah, Felix Lyanto Setiawan, Ririana Annisatul Lathifah, Nur Izzah Nurdin,
 Tantri Tarisma} %% Author name

%% Author affiliation
\affiliation{organization={Department of Mathematics, Faculty of Science and Analytical Data, Institut Teknologi Sepuluh Nopember},%Department and Organization
            addressline={Sukolilo}, 
            city={Surabaya},
            postcode={60111}, 
            state={East Java},
            country={Indonesia}}

%% Abstract
\begin{abstract}
%% Text of abstract
This paper focuses on the best approximation in quasi-cone metric spaces, a combination of quasi-metrics and cone metrics, which generalizes the notion of distance by allowing it to take values in an ordered Banach space. We explore the fundamental properties of best approximations in this setting, such as the best approximation sets and the Chebyshev sets.
\end{abstract}

%% Keywords
\begin{keyword}
%% keywords here, in the form: keyword \sep keyword
Best approximations \sep Quasi-cone metric spaces \sep Chebyshev sets. 
%% PACS codes here, in the form: \PACS code \sep code
%% MSC codes here, in the form: \MSC code \sep code

\MSC[2020] 41A50 \sep 41A65 \sep 46A16
%% or \MSC[2008] code \sep code (2000 is the default)

\end{keyword} 

\end{frontmatter}

\section{Introduction and Preliminaries}
\label{sec1:intro}

Approximation theory is one of the important topics in functional analysis, offering powerful
 tools to address problems in optimization, numerical analysis, and applied mathematics (see \cite{Singer1970-fu,goncharov2000,Yuan2022,Dunham1991,Hromadka1987,steffens2006,Iske2018}). This field seeks to understand the concept of the best approximation-an element that minimizes the distance to a given target within a specific set.

Historically, approximation theory was initially developed to find the approximate values of a real-valued function in Euclidean spaces (see \cite{goncharov2000,Iske2018}). Over time, it expanded into more sophisticated settings, such as inner product spaces and normed spaces (see \cite{Singer1970-fu,Iske2018}). The best approximation theory is further discussed in the classical metric spaces setting (see \cite{Khalil1988,narang1983,Digar2024}). Furthermore, researchers have extended the concept to non-symmetric metric spaces, commonly referred to as quasi-metric spaces, which offer a more generalized framework for approximation problems (see \cite{Romaguera2000,Abkar2016}).

Cone metric spaces, introduced as a generalization of classical metric spaces, provide an even broader perspective. In this setting, the distance between points is not always a real number but an element of an ordered Banach space called a cone (see \cite{huang2007}). This extension
 allows for the analysis of more complex structures, with more potential applications. In this cone metric space, the best approximation was also introduced, which was first discussed by Rezapour (see \cite{Rezapour2007}). 

More recently, the combination of quasi-metrics and cone metrics has produced a quasi-cone metric space which offers flexibility in measuring the distance (see \cite{Abdeljawad2009,Yaying2016,alyaari2022}). However, the theory of the best approximation in this space remains unexplored. In this paper, we investigate the concept of best approximations in quasi-cone metric spaces. Specifically, we aim to obtain fundamental results regarding the best approximation sets and the Chebyshev sets.

Now, we present the concept of a cone and some results in \cite{huang2007,Rezapour2007,Yaying2016,alyaari2022}.

\begin{definition}[See Definition 1.1 of \cite{Yaying2016}]
\label{def1}
Let $\mathcal{P}$ be a non-empty subset of a real Banach space $\mathcal{B}$. Then, $\mathcal{P}$ is said to be a cone over $\mathcal{B}$ if and only if
\begin{enumerate}
    \item[(C1)] $\mathcal{P}$ is closed and $\mathcal{P}\neq \{0_\mathcal{B}\}$;
    \item[(C2)] $ax+by\in \mathcal{P}$ for all $x,y\in \mathcal{P}$ and $a,b\in \mathbb{R}^{+}\cup \{0\}$;
    \item[(C3)] $x\in \mathcal{P}$ and $-x\in \mathcal{P}$ implies $x=0_{\mathcal{B}}$.
\end{enumerate}
\end{definition}
A cone can be referred to as the nonnegative space of $\mathcal{B}$. The following example illustrates this concept.
\begin{example} \label{example1.1}
Let $\mathcal{B}=\mathbb{R}$. It is straightforward to show that $\mathcal
{P}=\mathbb{R}^{+}\cup \{0\}$ satisfies axioms (C1), (C2) and (C3) in Definition \ref{def1}. Moreover, this cone can be extended for an Euclidean space $\mathcal{B}=\mathbb{R}^{n}$ with the corresponding cone $\mathcal{P}=\{(x_1,x_2,\dots,x_n)^T\in \mathcal{B}:x_i\geq 0,i=1,2,\dots,n\}.$

\end{example}
Let us now introduce a partial ordering \(\preceq\). Let $\mathcal{P}$ be a cone over a real Banach space $\mathcal{B}$. A partial ordering \(\preceq\) on $\mathcal{B}$ is defined such that for any \(r, s \in \mathcal{B}\), \(s \preceq r\) if and only if \(r - s \in \mathcal{P}\). In addition, \(x \prec y\) denotes \(x \preceq y\) and \(x \neq y\).

Moreover, we denote by $ s \ll r$ if $r-s \in$ int($\mathcal{P}$)\footnote{int($\mathcal{P}$) is an interior of cone $\mathcal{P}$ (see page 85 in \cite{Rezapour2007}).}.

\begin{example}
\label{ex2}
    Let $\mathcal{P}=\{(x_1,x_2,x_3)^T\in \mathcal{B}:x_1,x_2,x_3\geq 0\}$. We can observe that $$ \left(
\begin{array}{c}
1\\
4\\
3
\end{array}
\right) \prec 
\left(
\begin{array}{c}
1\\
4\\
5
\end{array}
\right)\text{ but }\left(
\begin{array}{c}
1\\
4\\
3
\end{array}
\right) \not\ll
\left(
\begin{array}{c}
1\\
4\\
5
\end{array}
\right).$$ 
In particular, we have $(1~4~5)^T-(1~4~3)^T=(0~0~2)^T\in \mathcal{P}$ and thus $(1~4~3)^T\prec(1~4~5)^T$. Moreover,  we also have $(1~4~5)^T-(1~4~3)^T=(0~0~2)^T\notin \intP$ and it implies 
$(1~4~3)^T\not\ll(1~4~5)^T$.
\end{example}
In the following, we always consider that the set $\mathcal{B}$ be a real Banach space and $\mathcal{P}$ be a cone over $\mathcal{B}$. Next, we present the definition of the quasi-cone metric spaces as follows.
\begin{definition}[See Definition 1.3 of \cite{Yaying2016}]
\label{def2}
A function $d:\mathcal{Q} \times \mathcal{Q} \to \mathcal{B}$ is said to be a quasi-cone metric on $\mathcal{Q}$ if for any $r,s,t\in \mathcal{Q}$ the following axioms are satisfied:
\begin{enumerate}
    \item[(QCM1)] $d(r,s)\succeq 0_\mathcal{B}$;
    \item[(QCM2)] $d(r,s)=0_\mathcal{B}$ if and only if $r=s$;
    \item[(QCM3)] $d(r,t) \preceq d(r,s)+d(s,t)$.
\end{enumerate}
A pair $(\mathcal{Q},d)$ is so-called a quasi-cone metric space.
\end{definition}
To illustrate the above equation, we provide an example of a quasi-cone metric space.
\begin{example}[See Example 2.5 of \cite{alyaari2022}]
    \label{ex3}
    Suppose $\mathcal{Q}=\mathbb{R}, \mathcal{B} = \mathbb{R}^2$, and $\mathcal{P}=\{ (a,b) \in \mathcal{B}:a,b \ge 0\}$. Let us define $d:\mathcal{Q} \times \mathcal{Q} \to \mathcal{B}$ such that for all $r,s\in \mathcal{Q}$
    \begin{equation*}
d(r,s)=\begin{cases}
(0,0), &  \text{if } r=s\\
(1,0), &  \text{if } r>s\\
(0,1), &  \text{if } r<s.
\end{cases}
\end{equation*}
Then, $(\mathcal{Q},d)$ is a quasi-cone metric space.
\end{example}

Then, we define the set $\mathcal{S}$, referred to as sequentially compact, to characterize the existence of a best approximation:
\begin{definition}[See Definition 1.8 of \cite{Yaying2016}]
    \label{def5}
    Let $(\mathcal{Q},d)$ be a quasi-cone metric space. We say that a set $\mathcal{S}\subseteq Q$ is forward (resp. backward) sequentially compact if every sequence in $\mathcal{S}$ has a forward (resp. backward) convergent subsequence to an element of $\mathcal{S}$.
\end{definition}

Now we define the Chebyshev subset of $Q$, which we will use to determine the uniqueness of the best approximation:
\begin{definition}
    \label{def5}
    Let $(\mathcal{Q},d)$ be a quasi-cone metric space. We say that a set $H\subseteq Q$, $H\neq \emptyset$ is a forward (resp. backward) Chebyshev subset of $\mathcal{Q}$ if the forward (resp. backward) best approximation sets has only one element for all $q\in \mathcal{Q}$. 
\end{definition}

\begin{definition}
    Let $(\mathcal{Q},d)$ be a quasi-cone metric space. We say that a set $H\subseteq Q$, $H\neq \emptyset$ is forward (resp. backward) quasi Chebyshev subset of $\mathcal{Q}$ if the forward (resp. backward) best approximation sets is a forward sequentially compact (resp. backward) subset of $\mathcal{Q}$ for all $q\in \mathcal{Q}$.
\end{definition}

\begin{definition}
    Let $Q$ be a real vector space and $(\mathcal{Q},d)$ be a quasi-cone metric space. We say that a set $H\subseteq Q$, $H\neq \emptyset$ is forward (resp. backward) pseudo Chebyshev subset of $\mathcal{Q}$ if the forward (resp. backward) best approximation sets does not contains infinitely many linearly independent elements for all $q\in Q$.
\end{definition}
\section*{The structure of the paper}
The structure of the paper is organized as follows. In section \ref{sec2}, we present the definition of forward and backward best approximation in Definition \ref{def:fba} and \ref{def:bba}, then we demonstrate the concept in Example \ref{ex3} and \ref{ex4}. Next, the Theorem \ref{theorem1} and \ref{theorem2} is given as our main result that provide conditions for a set to be a subset of the best approximation set, with supporting lemmas in Lemma \ref{lemma:supp1} and \ref{lemma:supp2}. Additionally we provide some theorem concerning the Chebyshev set which characterize the uniqueness of the forward (backward) best approximations in Theorem \ref{theo3} and \ref{theo4}.  Finally, we present Theorems \ref{thm:fqC}, \ref{thm:bqC}, \ref{thm:fpqC}, and \ref{thm:bpqC} which characterize the forward (backward) quasi Chebyshev and pseudo quasi Chebyshev sets.

\section{Main Results}\label{sec2}
In this section, we present some of our results. \\
First, let us define the best approximation on quasi-cone metric spaces.
\begin{definition}[Forward Best Approximation]\label{def:fba}
Let $(\mathcal{Q},d)$ be a quasi-cone metric space, $H$ be a non-empty subset of $\mathcal{Q}$ and $q\in \mathcal{Q}$. If $h_{f} \in H$ and $d(q,h_f)\preceq d(q,h), \forall h \in H
$, then $h_f$ is an element of forward best approximation to $q$. We denote the set of all forward best approximations to $q$ in $H$ by $\mathcal{P}_{{H}_{f}}(q)$.
\end{definition}

\begin{definition}[Backward Best Approximation]\label{def:bba}
Let $(\mathcal{Q},d)$ be a quasi-cone metric space, $H$ be a non-empty subset of $\mathcal{Q}$ and $q\in \mathcal{Q}$. If $h_{b} \in H$ and $d(h_b,q)\preceq d(h,q), \forall h \in H$, then $h_b$ is an element of backward best approximation to $q$. We denote the set of all backward best approximations to $q$ in $H$ by $\mathcal{P}_{{H}_{b
}}(q)$.
\end{definition}

This following example demonstrates the concept.

\begin{example}
    \label{ex3}
    Suppose $\mathcal{Q}=\mathbb{R}, \mathcal{B} = \mathbb{R}^2$, and $\mathcal{P}=\{ (a,b) \in \mathcal{B}:a,b \ge 0\}$. Let us define $d:\mathcal{Q} \times \mathcal{Q} \to \mathcal{B}$ such that for all $r,s\in \mathcal{Q}$
    \begin{equation*}
d(r,s)=\begin{cases}
(0,0), &  \text{if } r= s\\
(1,0), & \text{if } r>s\\
(0,1), &  \text{if } r<s.
\end{cases}
\end{equation*}
Then, $(\mathcal{Q},d)$ is a quasi-cone metric space. We define a mapping $F:\mathcal{Q}\to \mathcal{Q}$ by $F(\beta)=\beta^2$ where $\beta \in \mathbb{R}$. Let $H:=(-\infty,0)$, then $h_f=-1$ is an element of  forward best approximations to $F$ in $H$ since $d(F,-1)=(1,0)=d(F,h)$ for all $h\in H$. Also, for all we have $\mathcal{P}_{H_f}=H$ is the set of all forward best approximations to $F$ in $H$, since for any $h_f\in \mathcal{P}_{H_f}$ one has $d(F,h_f)=(1,0)=d(F,h)$ for all $h\in H$.
\end{example}
\begin{example}
    \label{ex4}
    Suppose $\mathcal{Q}=\mathbb{R}, \mathcal{B} = \mathbb{R}^2$, $\mathcal{P}=\{ (a,b) \in \mathcal{B}:a,b \ge 0\}$, and $\alpha>0$. Let us define $d:\mathcal{Q} \times \mathcal{Q} \to \mathcal{B}$ such that for all $r,s\in \mathcal{Q}$
    \begin{equation*}
d(r,s)=\begin{cases}
(r-s,\alpha(r-s)), &  \text{if } r\geq s\\
(\alpha,1), &  \text{if } r<s.
\end{cases}
\end{equation*}
Then, $(\mathcal{Q},d)$ is a quasi-cone metric space. We define a mapping $F:\mathcal{Q}\to \mathcal{Q}$ by $F(\beta)=\beta$ where $\beta \in \mathbb{R}$. Let $H:=[0,2]$, then for $\beta>2$, $h_f=2$ is an element of forward best approximations to $F$ in $H$ since $$d(F,2)=(\beta-2,\alpha(\beta-2))\preceq (\beta-h,\alpha(\beta-h))=d(F,h)$$ for all $h\in H$. Furthermore, for $\beta<0$, $\mathcal{P}_{H_f}=H$ is the set of all forward best approximations to $F$ in $H$. For $\beta\in[0,2]$, then $\mathcal{P}_{H_f}=\{\beta\}$, and for $\beta>2$, then $\mathcal{P}_{H_f}=\{2\}$.
\end{example}

\section*{The Best Approximation Set}
In this part, we establish necessary and sufficient criteria for an element to qualify as a best approximation. We also provide conditions for a set to be a subset of the best approximation set in a quasi-cone metric space, both for forward and backward approximations. \\
Our main contributions regarding forward best approximation are summarized in the following theorem. This result provides a necessary and sufficient characterization for a set to be contained within the best approximation set.

\begin{theorem} \label{theorem1}
Let $(\mathcal{Q},d)$ be a quasi-cone metric space,  and let $H$ be a non-empty subset of $\mathcal{Q}$ and $q\in \mathcal{Q}$. Then $\mathcal{M}\subseteq \mathcal{P}_{H_f}(q)$ if and only if there exists a function $f:\mathcal{Q}\to \mathcal{B}$ such that $f(m_f)=d(q,m_f), f_{m_f}(H):=\{f(h)-f(m_f) ~:~ h\in H\}\subseteq \mathcal{P}$, and $f_{d_f}
(H):= \{d(q,h)-f(h):h\in H\}\subseteq \mathcal{P}$ for all $m_f \in \mathcal{M}$.
\end{theorem}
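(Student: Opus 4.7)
The plan is to prove the biconditional by handling the two directions separately, since both reduce quickly to linear bookkeeping in the cone $\mathcal{P}$ once the right witness $f$ is identified. The key algebraic facts I will need are that $0_{\mathcal{B}}\in\mathcal{P}$ (apply (C2) with $a=b=0$) and that $\mathcal{P}$ is closed under addition (apply (C2) with $a=b=1$); the partial order $\preceq$ is defined exactly so that $x\preceq y$ iff $y-x\in\mathcal{P}$, so membership statements about $\mathcal{P}$ translate directly into inequalities in the cone order.

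For the forward direction, assume $\mathcal{M}\subseteq \mathcal{P}_{H_f}(q)$. The natural candidate is simply $f(x):=d(q,x)$ for every $x\in \mathcal{Q}$. I then verify the three listed conditions: first, $f(m_f)=d(q,m_f)$ is immediate; second, for any $m_f\in\mathcal{M}$ and any $h\in H$, the defining property of a forward best approximation gives $d(q,m_f)\preceq d(q,h)$, hence $f(h)-f(m_f)=d(q,h)-d(q,m_f)\in\mathcal{P}$; third, $d(q,h)-f(h)=0_{\mathcal{B}}\in\mathcal{P}$. This handles $(\Rightarrow)$.

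For the reverse direction, suppose such an $f$ exists. Fix $m_f\in\mathcal{M}$ and $h\in H$. The main idea is to insert $f(h)$ as a telescoping term and write
\begin{equation*}
d(q,h)-d(q,m_f) \;=\; d(q,h)-f(m_f) \;=\; \bigl(d(q,h)-f(h)\bigr)+\bigl(f(h)-f(m_f)\bigr),
\end{equation*}
using $f(m_f)=d(q,m_f)$ in the first equality. By hypothesis both bracketed differences lie in $\mathcal{P}$, so their sum does too (by (C2)), which gives $d(q,m_f)\preceq d(q,h)$. Since $h\in H$ was arbitrary, $m_f\in\mathcal{P}_{H_f}(q)$, and since $m_f\in\mathcal{M}$ was arbitrary, $\mathcal{M}\subseteq\mathcal{P}_{H_f}(q)$.

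I do not expect a serious obstacle here; the statement is really a reformulation of the definition of $\mathcal{P}_{H_f}(q)$ via a factorization $d(q,h)-d(q,m_f) = A+B$ with $A,B\in\mathcal{P}$. The only care required is to keep track of the quantifier over $m_f\in\mathcal{M}$, since the same $f$ is supposed to work simultaneously for every element of $\mathcal{M}$ — but the canonical choice $f=d(q,\cdot)$ clearly does, so this poses no difficulty.
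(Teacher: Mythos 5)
Your proof is correct and follows essentially the same route as the paper: the canonical witness $f = d(q,\cdot)$ for one direction, and the decomposition $d(q,h)-d(q,m_f) = \bigl(d(q,h)-f(h)\bigr)+\bigl(f(h)-f(m_f)\bigr)\in\mathcal{P}$ (i.e.\ the chain $d(q,m_f)=f(m_f)\preceq f(h)\preceq d(q,h)$) for the other. The only cosmetic difference is that you verify everything directly, whereas the paper factors the pointwise statement out as Lemma \ref{lemma:supp1} and, in the converse, recovers a single uniform witness by a sandwich argument starting from an arbitrary $m_{f_1}\in\mathcal{M}$; your immediate choice of $f=d(q,\cdot)$ reaches the same conclusion more cleanly.
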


We first present a supporting lemma. The following lemma establishes necessary and sufficient conditions for an element of a set to belong to the best approximation sets.
\begin{lemma} \label {lemma:supp1}
Let $(\mathcal{Q},d)$ be a quasi-cone metric space, and let $H$ be a non-empty subset of $\mathcal{Q}$ and $q\in \mathcal{Q}$. Then $h_{f} \in H$ is a forward best approximation to $q\in \mathcal{Q}$ (i.e. $h_{f}\in \mathcal{P}_{H_{f}}(q)$) if and only if there exists a function $f:\mathcal{Q}\to \mathcal{B}$ such that $f(h_f)=d(q,h_f), f(h)\succeq f(h_f)$, and $d(q,h)\succeq f(h)$ for all $h \in H$.
\end{lemma}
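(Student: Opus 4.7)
The plan is to prove both directions by taking $f$ to be the distance function itself for the forward direction, and then using transitivity of the partial ordering $\preceq$ for the reverse direction.

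For the ``only if'' direction, I would assume $h_f\in\mathcal{P}_{H_f}(q)$, which by Definition \ref{def:fba} means $d(q,h_f)\preceq d(q,h)$ for every $h\in H$. The natural candidate is to define $f:\mathcal{Q}\to\mathcal{B}$ by $f(x):=d(q,x)$. Then $f(h_f)=d(q,h_f)$ holds by construction. The condition $f(h)\succeq f(h_f)$ for all $h\in H$ is exactly the best-approximation property rewritten in terms of $f$. The condition $d(q,h)\succeq f(h)$ is immediate since $d(q,h)-f(h)=0_{\mathcal{B}}\in\mathcal{P}$ (using that $0_{\mathcal{B}}\in\mathcal{P}$ by axiom (C2) with $a=b=0$).

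For the ``if'' direction, I would suppose such an $f$ exists and show $h_f\in\mathcal{P}_{H_f}(q)$. Fix any $h\in H$. From the three given properties I have $d(q,h)-f(h)\in\mathcal{P}$ and $f(h)-f(h_f)\in\mathcal{P}$. By axiom (C2), summing these yields $d(q,h)-f(h_f)\in\mathcal{P}$, i.e., $f(h_f)\preceq d(q,h)$. Since $f(h_f)=d(q,h_f)$, I conclude $d(q,h_f)\preceq d(q,h)$, which is exactly the forward best approximation condition. This amounts to establishing transitivity of $\preceq$ on the fly from the cone axioms.

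Since the forward direction just uses $f=d(q,\cdot)$ and the reverse direction is a two-step chain of inequalities in the cone, I do not anticipate a serious obstacle; the only care needed is to ensure the manipulations are phrased strictly in terms of the cone axioms (C1)--(C3) rather than invoking unjustified properties of $\preceq$, since the proof is elementary precisely because $\preceq$ is a genuine partial order compatible with addition.
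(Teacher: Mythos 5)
Your proposal is correct and follows essentially the same route as the paper: the "only if" direction takes $f(x)=d(q,x)$, and the "if" direction chains $d(q,h_f)=f(h_f)\preceq f(h)\preceq d(q,h)$. The only difference is that you explicitly derive transitivity of $\preceq$ from axiom (C2), which the paper uses implicitly; this is a welcome but minor refinement.
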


\begin{proof}
Suppose that $h_f\in \mathcal{P}_{H_f}(q)$, we can define $f:\mathcal{Q}\to \mathcal{B}$ by $f(x)=d(q,x)$. Then for all $h\in H$ we have $d(q,h)=f(h)$ and  $$f(h)=d(q,h)\succeq d(q,h_f) =f(h_f),$$
from the definition of the forward best approximation.
  
Next, suppose that there exists a function $f:\mathcal{Q}\to \mathcal{B}$ such that $f(h_f)=d(q,h_f), f(h)\succeq f(h_f)$, and $d(q,h)\succeq f(h)\in \mathcal{P}$ for all $h \in H$. Then for all $h\in H$, one has
$$d(q,h_f)=f(h_f)\preceq f(h)\preceq d(q,h),$$
implying $h_f\in \mathcal{P}_{H_f}(q)$.
\end{proof}
We now present the proof of Theorem \ref{theorem1}.
\begin{proof}[Proof of Theorem \ref{theorem1}]
    Let us assume that there exists a function $f:\mathcal{Q}\to \mathcal{B}$ such that $f(m_f)=d(q,m_f), f_{m_f}(H)\subseteq \mathcal{P}, f_{d_f}(H)\subseteq \mathcal{P}$ for all $m_f\in \mathcal{M}$. By using Lemma \ref{lemma:supp1}, we have $m_f\in \mathcal{P}_{H_f}(q)$ for all $m_f\in \mathcal{M}$. Thus, $\mathcal{M}\subseteq \mathcal{P}_{H_f}(q)$.

    For the converse part, suppose that $\mathcal{M}\subseteq \mathcal{P}_{H_f}(q)$ and choose any $m_{f_1}\in \mathcal{M}$. Using Lemma \ref{lemma:supp1}, there exists a function $f:\mathcal{Q}\to \mathcal{B}$ such that $f(m_{f_1})=d(q,m_{f_1}), f_{m_{f_1}}(H)\subseteq \mathcal{P}$, and $f_{d_f}(H)\subseteq \mathcal{P}$. Then, for $m_f\in \mathcal{M}$, we have $f_{m_{f_1}}(m_f)\in \mathcal{P}$ and $f_{d_f}(m_f)\in \mathcal{P}$. In other words, we have $f(m_f)\succeq f(m_{f_1})=d(q,m_{f_1})$,  and $d(q,m_f)\succeq f(m_f)$. Since $m_f\in \mathcal{P}_{H_f}(q)$, then $d(q,m_f)\preceq d(q,m_{f_1})$. Hence $d(q,m_f)\preceq f(m_f)\preceq d(q,m_f)$, implying $f(m_f)=d(q,m_f)$. Furthermore, for every $h\in H$, one has
    \begin{align*}
        f_{m_f}(h)=f(h)-f(m_f) = f(h) - d(q,m_f) = f(h)-d(q,m_{f_1}) = f_{m_{f_1}}(h)\in \mathcal{P}
    \end{align*}
    Thus, $f$ is the function that we expected.
\end{proof}

Next, we also present the result for the backward best approximation, where the proof is similar to the forward case.
\begin{lemma} \label{lemma:supp2}
Let $(\mathcal{Q},d)$ be a quasi-cone metric space,  and let $H$ be a non-empty subset of $\mathcal{Q}$ and $q\in \mathcal{Q}$. Then $h_{b} \in H$ is a backward best approximation to $q\in \mathcal{Q}$ (i.e. $h_{b}\in \mathcal{P}_{H_{b}}(q)$) if and only if there exists a function $f:\mathcal{Q}\to \mathcal{B}$ such that $f(h_b)=d(h_b,q), f(h)\succeq f(h_b)$, and $d(h,q)\succeq f(h)$ for all $h \in H$.
\end{lemma}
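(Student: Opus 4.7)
The plan is to mirror the proof of Lemma \ref{lemma:supp1} almost verbatim, with the sole modification that the arguments of $d$ are reversed, since the backward notion of best approximation measures $d(h,q)$ rather than $d(q,h)$. The structure will again be two implications proved independently, and in each direction the witness function will be the distance-to-$q$ map (in the appropriate slot).

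For the forward implication, I would assume $h_b \in \mathcal{P}_{H_b}(q)$ and define $f:\mathcal{Q}\to \mathcal{B}$ by $f(x) := d(x,q)$. Then $f(h_b) = d(h_b,q)$ by definition, the inequality $f(h) \succeq f(h_b)$ for all $h \in H$ is exactly the defining property of backward best approximation from Definition \ref{def:bba}, and $d(h,q) \succeq f(h)$ holds trivially as equality. So the three required properties are immediate.

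For the converse, I would suppose such an $f$ exists and chain the inequalities: for any $h \in H$,
\begin{equation*}
d(h_b,q) = f(h_b) \preceq f(h) \preceq d(h,q),
\end{equation*}
where the first equality is the assumption $f(h_b)=d(h_b,q)$, the middle $\preceq$ is $f(h)\succeq f(h_b)$, and the last is $d(h,q)\succeq f(h)$. Transitivity of $\preceq$ on $\mathcal{B}$ (which follows from the axioms of the cone $\mathcal{P}$) then gives $d(h_b,q) \preceq d(h,q)$ for all $h\in H$, so $h_b \in \mathcal{P}_{H_b}(q)$.

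There is no real obstacle here: the asymmetry of $d$ in a quasi-cone metric space plays no role beyond bookkeeping, because the forward proof never invoked the symmetry of the distance. The only point that requires care is to be consistent about writing $d(\cdot, q)$ (rather than $d(q,\cdot)$) in every occurrence, including in the definition of $f$; otherwise the argument is a direct transcription of the forward case.
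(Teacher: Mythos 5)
Your proposal is correct and is exactly the argument the paper intends: the paper's proof of this lemma is simply the remark ``Similar to Lemma \ref{lemma:supp1}'', and your transcription --- defining $f(x)=d(x,q)$ in one direction and chaining $d(h_b,q)=f(h_b)\preceq f(h)\preceq d(h,q)$ in the other --- is precisely that mirrored argument. No issues.
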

\begin{proof}
Similar to Lemma \ref{lemma:supp1}
\end{proof}

\begin{theorem} \label{theorem2}
Let $(\mathcal{Q},d)$ be a quasi-cone metric space, and let $H$ be a non-empty subset of $\mathcal{Q}$ and $q\in \mathcal{Q}$. Then $\mathcal{M}\subseteq \mathcal{P}_{H_b}(q)$ if and only if there exists a function $f:\mathcal{Q}\to \mathcal{B}$ such that $f(m_b)=d(m_b,q), f_{m_b}(H)\subseteq \mathcal{P}$, $f_{d_b}(H):=\{d(h,q)-f(h):h\in H\}\subseteq \mathcal{P}$ for all $m_b \in \mathcal{M}$.
\end{theorem}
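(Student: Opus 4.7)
The plan is to mirror the argument used for Theorem \ref{theorem1}, with the asymmetric roles of the arguments of $d$ swapped and Lemma \ref{lemma:supp2} invoked in place of Lemma \ref{lemma:supp1}. Nothing new conceptually is needed; the work is to verify that the same skeleton still goes through when distances are written as $d(h,q)$ rather than $d(q,h)$.

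For the sufficiency direction, I would fix an arbitrary $m_b \in \mathcal{M}$ and unpack the three hypotheses on $f$ at that point: $f(m_b)=d(m_b,q)$ is immediate, the inclusion $f_{m_b}(H)\subseteq \mathcal{P}$ means $f(h)\succeq f(m_b)$ for every $h\in H$, and $f_{d_b}(H)\subseteq \mathcal{P}$ means $d(h,q)\succeq f(h)$ for every $h\in H$. These are exactly the three conditions required by Lemma \ref{lemma:supp2}, so $m_b\in \mathcal{P}_{H_b}(q)$. As $m_b$ was arbitrary, $\mathcal{M}\subseteq \mathcal{P}_{H_b}(q)$.

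For the necessity direction, the key idea is to build the witnessing function once, using a single element of $\mathcal{M}$, and then show it automatically witnesses all the others. Concretely, fix any $m_{b_1}\in \mathcal{M}$; Lemma \ref{lemma:supp2} supplies $f:\mathcal{Q}\to\mathcal{B}$ with $f(m_{b_1})=d(m_{b_1},q)$, $f_{m_{b_1}}(H)\subseteq \mathcal{P}$, and $f_{d_b}(H)\subseteq \mathcal{P}$. For an arbitrary $m_b\in \mathcal{M}$, I would specialize the two inclusions at $h=m_b\in H$ to obtain $f(m_b)\succeq f(m_{b_1})=d(m_{b_1},q)$ and $d(m_b,q)\succeq f(m_b)$, while the fact that $m_b,m_{b_1}\in \mathcal{P}_{H_b}(q)$ yields $d(m_b,q)\preceq d(m_{b_1},q)$. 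Combining them gives the chain
\begin{equation*}
d(m_{b_1},q)\preceq f(m_b)\preceq d(m_b,q)\preceq d(m_{b_1},q),
\end{equation*}
and axiom (C3), which forces antisymmetry of $\preceq$, collapses the chain to equalities. In particular $f(m_b)=d(m_b,q)$.

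With this identity in hand, the remaining conditions follow by bookkeeping: for every $h\in H$,
\begin{equation*}
f_{m_b}(h)=f(h)-f(m_b)=f(h)-d(m_b,q)=f(h)-d(m_{b_1},q)=f(h)-f(m_{b_1})=f_{m_{b_1}}(h)\in \mathcal{P},
\end{equation*}
so $f_{m_b}(H)\subseteq \mathcal{P}$, while $f_{d_b}(H)\subseteq \mathcal{P}$ is inherited unchanged because it does not depend on $m_b$. The only nontrivial step is the antisymmetry collapse in the chain above; everything else is a direct transcription of the forward-case proof with $d(q,\cdot)$ replaced by $d(\cdot,q)$, so I expect no real obstacle beyond getting the ordering of arguments right throughout.
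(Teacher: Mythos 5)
Your proposal is correct and is exactly what the paper intends: its proof of Theorem \ref{theorem2} is simply ``Similar to Theorem \ref{theorem1}'', and you have carried out that transcription faithfully, replacing $d(q,\cdot)$ by $d(\cdot,q)$ and Lemma \ref{lemma:supp1} by Lemma \ref{lemma:supp2}, with the same single-witness construction and antisymmetry collapse in the necessity direction.
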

\begin{proof}
Similar to Theorem \ref{theorem1}
\end{proof}
\section*{The Chebyshev Set}
In this part, we discuss 3 types of Chebyshev sets. The first one is the Chebyshev set, which means that the best approximation is unique for each element. Next, the uniqueness condition of the best approximation is weakened in the case of quasi-Chebyshev sets. Finally, we consider pseudo-Chebyshev sets, which apply to a real vector space $Q$, where the uniqueness condition of the best approximation is also weakened.

\begin{theorem}\label{theo3}
    Let $(\mathcal{Q},d)$ be a quasi-cone metric space, and let $H$ be a non-empty subset of $\mathcal{Q}$. Then $H$ is a forward Chebyshev subset of $\mathcal{Q}$ if and only if there don't exist $q\in \mathcal{Q}$, distinct elements $h_1,h_2\in H$, and function $f:\mathcal{Q}\to \mathcal{B}$ such that $f(h_i)=d(q,h_i)$, $f_{h_i}(H)\subseteq \mathcal{P}$, and $f_{d_f}(H)\subseteq \mathcal{P}$.
\end{theorem}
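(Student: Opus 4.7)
The plan is to read Theorem \ref{theo3} as a direct contrapositive of Theorem \ref{theorem1} applied to the two-element set $\mathcal{M}=\{h_1,h_2\}$, so that the whole argument collapses into two short invocations of that earlier result.

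For the $(\Leftarrow)$ direction (contrapositive form) I would assume that the forbidden data exists: some $q\in\mathcal{Q}$, distinct $h_1,h_2\in H$, and $f:\mathcal{Q}\to\mathcal{B}$ with $f(h_i)=d(q,h_i)$, $f_{h_i}(H)\subseteq\mathcal{P}$ and $f_{d_f}(H)\subseteq\mathcal{P}$. Setting $\mathcal{M}=\{h_1,h_2\}$, the three conditions on $f$ are exactly the hypothesis of the ``if'' part of Theorem \ref{theorem1}, so that theorem forces $\{h_1,h_2\}\subseteq\mathcal{P}_{H_f}(q)$. Since $h_1\neq h_2$, the best approximation set at $q$ contains at least two elements, and hence $H$ fails to be forward Chebyshev.

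For the $(\Rightarrow)$ direction I would argue contrapositively: if $H$ is not forward Chebyshev, then there exists $q\in\mathcal{Q}$ for which $\mathcal{P}_{H_f}(q)$ does not consist of a single point. In the relevant case there are two distinct elements $h_1,h_2\in\mathcal{P}_{H_f}(q)$; taking $\mathcal{M}=\{h_1,h_2\}$ and applying the ``only if'' direction of Theorem \ref{theorem1} yields a function $f:\mathcal{Q}\to\mathcal{B}$ with $f(h_i)=d(q,h_i)$, $f_{h_i}(H)\subseteq\mathcal{P}$ and $f_{d_f}(H)\subseteq\mathcal{P}$, which is precisely the configuration that Theorem \ref{theo3} claims cannot exist.

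The main obstacle is not algebraic but interpretive: the Chebyshev definition asks for $\mathcal{P}_{H_f}(q)$ to have \emph{exactly} one element, whereas the stated characterization only detects the non-uniqueness case (two distinct approximants), not the possibility of emptiness. In the write-up I would either read ``forward Chebyshev'' as an a fortiori uniqueness statement once existence is granted, or explicitly note that emptiness is covered separately; the rest of the proof is a verbatim transfer of Theorem \ref{theorem1} to the two-point subset $\mathcal{M}=\{h_1,h_2\}$, so no further computation is needed.
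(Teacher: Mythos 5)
Your proof is essentially identical to the paper's: both directions are handled by applying Theorem \ref{theorem1} to the two-point set $\mathcal{M}=\{h_1,h_2\}$, exactly as the paper does. The emptiness caveat you raise is real, but the paper's own proof silently makes the same assumption (it equates ``not Chebyshev'' with the existence of two distinct best approximants), so your write-up, which at least flags the issue, is if anything slightly more careful.
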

\begin{proof}
    Suppose that $H$ is not forward Chebyshev subset of $\mathcal{Q}$, then there exist $q\in \mathcal{Q}$ and distinct elements $h_1,h_2\in \mathcal{P}_{H_f}(q)\subseteq H$. Then, by Theorem \ref{theorem1}, there exists a function $f:\mathcal{Q}\to \mathcal{B}$ such that $f(h_i)=d(q,h_i)$, $f_{h_i}(H)\subseteq \mathcal{P}$ and $f_{d_f}(H)\subseteq \mathcal{P}$ for $i=1,2$. \\
    For the converse part, suppose that there exist $q\in \mathcal{Q}$, distinct elements $h_1,h_2\in H$, and a function $f:\mathcal{Q}\to \mathcal{B}$ such that $f(h_i)=d(q,h_i),~ f_{h_i}(H)\subseteq \mathcal{P}$, and $f_{d_f}(H)\subseteq \mathcal{P}$ for $i=1,2$. Then, by Theorem \ref{theorem1}, we have $h_1,h_2\in \mathcal{P}_{H_f}(q)$ which means $H$ is not a forward Chebyshev subset of $\mathcal{P}$.
\end{proof}
\begin{theorem}\label{theo4}
    Let $(\mathcal{Q},d)$ be a quasi-cone metric space, and let $H$ be a non-empty subset of $\mathcal{Q}$. Then $H$ is a backward Chebyshev subset of $\mathcal{Q}$ if and only if there don't exist $q\in \mathcal{Q}$, distinct elements $h_1,h_2\in H$, and function $f:\mathcal{Q}\to \mathcal{B}$ such that $f(h_i)=d(h_i,q)$, $f_{h_i}(H)\subseteq \mathcal{P}$, and $f_{d_b}(H)\subseteq \mathcal{P}$.
\end{theorem}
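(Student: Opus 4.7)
The plan is to adapt the proof of Theorem~\ref{theo3} line-for-line, swapping the use of Theorem~\ref{theorem1} for its backward analogue Theorem~\ref{theorem2}. The statement is an ``iff'', and the cleanest route is to prove the contrapositive of the biconditional, namely: $H$ is \emph{not} a backward Chebyshev subset of $\mathcal{Q}$ if and only if there exist $q\in \mathcal{Q}$, distinct $h_1,h_2\in H$, and $f:\mathcal{Q}\to\mathcal{B}$ with $f(h_i)=d(h_i,q)$, $f_{h_i}(H)\subseteq\mathcal{P}$, and $f_{d_b}(H)\subseteq\mathcal{P}$ for $i=1,2$.

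For the forward implication, I would assume $H$ is not backward Chebyshev. By the definition of a backward Chebyshev set, there must be some $q\in\mathcal{Q}$ for which $\mathcal{P}_{H_b}(q)$ contains at least two distinct points; call them $h_1,h_2$. Setting $\mathcal{M}:=\{h_1,h_2\}\subseteq \mathcal{P}_{H_b}(q)$ and invoking Theorem~\ref{theorem2}, I obtain a single function $f:\mathcal{Q}\to\mathcal{B}$ such that $f(m_b)=d(m_b,q)$, $f_{m_b}(H)\subseteq\mathcal{P}$, and $f_{d_b}(H)\subseteq\mathcal{P}$ for every $m_b\in\mathcal{M}$. In particular these conditions hold for $m_b=h_1$ and $m_b=h_2$, which is precisely the conclusion sought.

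For the converse, I would assume $q$, distinct $h_1,h_2\in H$, and $f$ exist with the three stated properties for $i=1,2$. Taking $\mathcal{M}:=\{h_1,h_2\}$, the hypotheses of Theorem~\ref{theorem2} are satisfied, so $\mathcal{M}\subseteq \mathcal{P}_{H_b}(q)$. Therefore $\mathcal{P}_{H_b}(q)$ contains at least two distinct points, contradicting the definition of backward Chebyshev, so $H$ is not a backward Chebyshev subset of $\mathcal{Q}$.

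No serious obstacle is anticipated: the argument is a routine transposition of the forward proof, with $d(q,\cdot)$ replaced throughout by $d(\cdot,q)$ and $f_{d_f}$ replaced by $f_{d_b}$. The only place one must be a little careful is in the forward direction, where Theorem~\ref{theorem2} delivers a \emph{single} function $f$ valid for all elements of $\mathcal{M}$ simultaneously; this is exactly why we feed $\mathcal{M}=\{h_1,h_2\}$ into the theorem rather than producing two separate functions.
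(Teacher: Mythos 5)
Your proposal is correct and follows essentially the same route as the paper: the paper proves Theorem \ref{theo3} by applying Theorem \ref{theorem1} to the pair $\{h_1,h_2\}\subseteq \mathcal{P}_{H_f}(q)$ in both directions and declares Theorem \ref{theo4} ``similar,'' which is exactly your transposition using Theorem \ref{theorem2} with $\mathcal{M}=\{h_1,h_2\}$. Your remark about obtaining a single $f$ valid for both $h_1$ and $h_2$ simultaneously is a welcome clarification of a point the paper leaves implicit.
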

\begin{proof}
Similar to Theorem \ref{theo3}.
\end{proof}
\begin{theorem}\label{thm:fqC}
    Let $(\mathcal{Q},d)$ be a quasi-cone metric space, and let $H$ be a non-empty subset of $\mathcal{Q}$. Then $H$ is a forward quasi Chebyshev subset of $\mathcal{Q}$ if and only if there don't exist $q\in \mathcal{Q}$, a sequence $(h_n)\subseteq H$ without a $f$-convergent subsequence and a function $f:\mathcal{Q}\to \mathcal{B}$ such that $f(h_n)=d(q,h_n), f_{h_n}(H)\subseteq \mathcal{P}$, and $f_{d_f}(H)\subseteq \mathcal{P}$.
\end{theorem}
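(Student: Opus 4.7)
The plan is to reduce the statement to Theorem \ref{theorem1} by recognizing that the conditions $f(h_n)=d(q,h_n)$, $f_{h_n}(H)\subseteq\mathcal{P}$ and $f_{d_f}(H)\subseteq\mathcal{P}$ are exactly what Theorem \ref{theorem1} requires for the set $\mathcal{M}:=\{h_n:n\in\mathbb{N}\}$ to lie in $\mathcal{P}_{H_f}(q)$. Thus the hypothesis of the right-hand side can be rephrased as: there exists $q\in\mathcal{Q}$ and a sequence $(h_n)\subseteq\mathcal{P}_{H_f}(q)$ with no forward-convergent subsequence (I read ``$f$-convergent'' as ``forward-convergent'', consistently with Definition \ref{def5}). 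From there, the equivalence with the quasi Chebyshev property is essentially unpacking definitions.

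For the contrapositive of the forward direction, I would assume there do exist $q$, $(h_n)\subseteq H$ and $f:\mathcal{Q}\to\mathcal{B}$ fulfilling the three conditions, with $(h_n)$ having no forward-convergent subsequence to an element of $\mathcal{P}_{H_f}(q)$. Applying Theorem \ref{theorem1} to $\mathcal{M}=\{h_n:n\in\mathbb{N}\}$ with this $f$ yields $\mathcal{M}\subseteq\mathcal{P}_{H_f}(q)$, so $(h_n)$ is a sequence inside the best approximation set with no forward-convergent subsequence to a point of that set. Hence $\mathcal{P}_{H_f}(q)$ fails to be forward sequentially compact, contradicting the quasi Chebyshev property.

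For the converse direction, I would again argue contrapositively. Assume $H$ is not forward quasi Chebyshev; then by definition there exist $q\in\mathcal{Q}$ and a sequence $(h_n)\subseteq\mathcal{P}_{H_f}(q)$ admitting no forward-convergent subsequence to an element of $\mathcal{P}_{H_f}(q)$. Applying Theorem \ref{theorem1} with $\mathcal{M}=\{h_n:n\in\mathbb{N}\}\subseteq\mathcal{P}_{H_f}(q)$ produces a function $f:\mathcal{Q}\to\mathcal{B}$ such that, for every $m_f\in\mathcal{M}$, $f(m_f)=d(q,m_f)$, $f_{m_f}(H)\subseteq\mathcal{P}$ and $f_{d_f}(H)\subseteq\mathcal{P}$. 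Specializing $m_f$ to each $h_n$ gives exactly the forbidden configuration $(q,(h_n),f)$.

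The only real subtlety is matching the meaning of ``without an $f$-convergent subsequence'' with the notion used in the definition of forward sequential compactness, namely ``no forward-convergent subsequence to an element of $\mathcal{P}_{H_f}(q)$''. Everything else is a direct appeal to Theorem \ref{theorem1}, so I do not expect a genuine obstacle; the proof should be a short, two-paragraph argument once the notational point is settled.
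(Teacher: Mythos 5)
Your proposal is correct and follows essentially the same route as the paper: both directions are handled contrapositively by applying Theorem \ref{theorem1} to the set $\mathcal{M}=\{h_n : n\in\mathbb{N}\}$ and translating failure of forward sequential compactness of $\mathcal{P}_{H_f}(q)$ into the existence of the forbidden triple $(q,(h_n),f)$. The notational point you flag about reading ``$f$-convergent'' as ``forward convergent to an element of the set'' is glossed over in the paper in exactly the same way.
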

\begin{proof}
    Suppose that $H$ is not forward quasi Chebyshev subset of $\mathcal{Q}$. Then $\mathcal{P}_{H_f}(q)$ is not forward sequentially compact subset of $\mathcal{Q}$. It implies that there exist $q\in \mathcal{Q}$ and a sequence $(h_n)\in \mathcal{P}_{H_f}(q)$ without a $f$-convergent subsequence. Thus, by Theorem \ref{theorem1}, there exist $f:\mathcal{Q}\to \mathcal{B}$ such that $f(h_n)=d(q,h_n),~ f_{h_n}(H)\subseteq \mathcal{P}$, and $f_{d_f}(H)\subseteq \mathcal{P}$ for all $n\in\natural$.\\

    Next, suppose that there exist $q\in \mathcal{Q}$, a sequence $(h_n)\subseteq H$ without a $f$-convergent subsequence and a function $f:\mathcal{Q}\to \mathcal{B}$ such that $f(h_n)=d(q,h_n), f_{h_n}(H)\subseteq \mathcal{P}$, and $f_{d_f}(H)\subseteq \mathcal{P}$ for all $n\in \natural$. By Theorem \ref{theorem1}, $h_n\in \mathcal{P}_{H_f}(q)$ for all $n\in \natural$ and it implies that $\mathcal{P}_{H_f}(q)$ is not forward sequentially compact subset of $\mathcal{Q}$. Therefore, $H$ is not forward quasi Chebyshev subset of $\mathcal{Q}$.
\end{proof}
\begin{theorem}\label{thm:bqC}
    Let $(\mathcal{Q},d)$ be a quasi-cone metric space, and let $H$ be a non-empty subset of $\mathcal{Q}$. Then $H$ is a backward quasi Chebyshev subset of $\mathcal{Q}$ if and only if there don't exist $q\in \mathcal{Q}$, a sequence $(h_n)\subseteq H$ without a $b$-convergent subsequence and a function $f:\mathcal{Q}\to \mathcal{B}$ such that $f(h_n)=d(h_n,q), f_{h_n}(H)\subseteq \mathcal{P}$, and $f_{d_b}(H)\subseteq \mathcal{P}$.
\end{theorem}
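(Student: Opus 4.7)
The plan is to mirror the proof of Theorem \ref{thm:fqC} step by step, replacing every use of Theorem \ref{theorem1} with its backward counterpart Theorem \ref{theorem2}, and every forward notion ($d(q,h_n)$, $f_{d_f}$, forward sequentially compact, $f$-convergent) with its backward analogue ($d(h_n,q)$, $f_{d_b}$, backward sequentially compact, $b$-convergent). The biconditional will be established by proving the contrapositive in both directions, which is the natural packaging since the hypothesis of the ``only if'' direction is that $H$ fails to be backward quasi Chebyshev.

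For the forward direction (contrapositive of ``only if''), I would assume $H$ is not a backward quasi Chebyshev subset of $\mathcal{Q}$. By definition this means there exists $q\in\mathcal{Q}$ such that $\mathcal{P}_{H_b}(q)$ is not a backward sequentially compact subset of $\mathcal{Q}$, so there is a sequence $(h_n)\subseteq \mathcal{P}_{H_b}(q)\subseteq H$ admitting no $b$-convergent subsequence to an element of $\mathcal{P}_{H_b}(q)$. Setting $\mathcal{M}=\{h_n : n\in\natural\}\subseteq \mathcal{P}_{H_b}(q)$ and invoking Theorem \ref{theorem2}, I obtain a function $f:\mathcal{Q}\to\mathcal{B}$ with $f(h_n)=d(h_n,q)$, $f_{h_n}(H)\subseteq \mathcal{P}$, and $f_{d_b}(H)\subseteq \mathcal{P}$ for every $n\in\natural$, which is exactly the configuration whose existence is forbidden in the statement.

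For the reverse direction (contrapositive of ``if''), I would assume such a triple $(q,(h_n),f)$ exists. Applying Theorem \ref{theorem2} with $\mathcal{M}=\{h_n : n\in\natural\}$, the conditions $f(h_n)=d(h_n,q)$, $f_{h_n}(H)\subseteq \mathcal{P}$, and $f_{d_b}(H)\subseteq \mathcal{P}$ yield $h_n\in\mathcal{P}_{H_b}(q)$ for every $n$. Since $(h_n)$ has no $b$-convergent subsequence, $\mathcal{P}_{H_b}(q)$ fails to be backward sequentially compact, so $H$ is not a backward quasi Chebyshev subset of $\mathcal{Q}$.

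The main (minor) obstacle is purely notational bookkeeping: one must keep the order of arguments in $d(\cdot,\cdot)$ consistent with the backward definition and make sure $f_{d_b}(H)=\{d(h,q)-f(h):h\in H\}$ is used in place of $f_{d_f}(H)$. Since Theorem \ref{theorem2} already packages exactly the characterization needed, no new technical content is required beyond this substitution, and the proof can simply be noted as ``Similar to Theorem \ref{thm:fqC}, using Theorem \ref{theorem2} in place of Theorem \ref{theorem1}.''
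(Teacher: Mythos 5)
Your proposal is correct and matches the paper's intent exactly: the paper's proof of this theorem is literally ``Similar to Theorem \ref{thm:fqC}'', and your step-by-step mirroring of that argument with Theorem \ref{theorem2} in place of Theorem \ref{theorem1} and backward notions in place of forward ones is precisely the substitution the authors have in mind.
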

\begin{proof}
Similar to Theorem \ref{thm:fqC}.
\end{proof}
\begin{theorem}\label{thm:fpqC}
    Let $\mathcal{Q}$ be a real vector space, and let $(\mathcal{Q},d)$ be a quasi-cone metric space, and let $H$ be a non-empty subset of $\mathcal{Q}$. Then $H$ is a forward pseudo Chebyshev subset of $\mathcal{Q}$ if and only if there don't exist $q\in \mathcal{Q}$, infinitely many linearly independent elements $\{h_n\}\subseteq H$, and a function $f:\mathcal{Q}\to \mathcal{B}$ such that $f(h_n)=d(q,h_n), f_{h_n}(H)\subseteq \mathcal{P}$, and $f_{d_f}(H)\subseteq \mathcal{P}$.
\end{theorem}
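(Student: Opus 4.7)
The plan is to mirror the template used in Theorems \ref{theo3} and \ref{thm:fqC}, both of which reduce to Theorem \ref{theorem1} applied to a specific choice of the subset $\mathcal{M}$. Here the relevant $\mathcal{M}$ is a countable family $\{h_n\}$ of linearly independent elements of $H$, and the negation of the pseudo-Chebyshev property is precisely the existence of such an $\mathcal{M}$ sitting inside $\mathcal{P}_{H_f}(q)$ for some $q$. So the theorem should follow purely from Theorem \ref{theorem1} with $\mathcal{M}=\{h_n\}$, and the main content is simply to translate between ``infinitely many linearly independent elements lie in $\mathcal{P}_{H_f}(q)$'' and ``a function $f$ with the three listed properties exists.''

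For the forward implication I would argue by contrapositive: assume $H$ is not forward pseudo Chebyshev, so by definition there exists $q\in\mathcal{Q}$ for which $\mathcal{P}_{H_f}(q)$ contains infinitely many linearly independent elements $\{h_n\}\subseteq H$. Set $\mathcal{M}=\{h_n\}$; then $\mathcal{M}\subseteq\mathcal{P}_{H_f}(q)$, and Theorem \ref{theorem1} immediately delivers a function $f:\mathcal{Q}\to\mathcal{B}$ with $f(h_n)=d(q,h_n)$, $f_{h_n}(H)\subseteq\mathcal{P}$, and $f_{d_f}(H)\subseteq\mathcal{P}$ for every $n$, which is exactly the configuration we wanted to exclude.

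For the converse, again by contrapositive, I would start from the assumed data: $q\in\mathcal{Q}$, infinitely many linearly independent $\{h_n\}\subseteq H$, and a function $f$ with the three listed properties. Let $\mathcal{M}=\{h_n\}$. The hypotheses on $f$ are exactly those appearing in Theorem \ref{theorem1}, so that theorem yields $\mathcal{M}\subseteq\mathcal{P}_{H_f}(q)$. Thus $\mathcal{P}_{H_f}(q)$ contains the infinitely many linearly independent elements $\{h_n\}$, contradicting the definition of forward pseudo Chebyshev subset.

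There is no real obstacle, since Theorem \ref{theorem1} already does the heavy lifting; the only subtle point to state cleanly is that linear independence plays no role in the correspondence between $\mathcal{M}\subseteq\mathcal{P}_{H_f}(q)$ and the existence of $f$, it merely labels the cardinality/algebraic condition that must be preserved when passing between the two sides of the equivalence. Hence the proof can be given in a couple of short paragraphs that simply cite Theorem \ref{theorem1} in each direction, following the identical template as the proof of Theorem \ref{thm:fqC} with ``sequence without an $f$-convergent subsequence'' replaced by ``infinitely many linearly independent elements.''
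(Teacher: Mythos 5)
Your proposal is correct and follows essentially the same route as the paper: both directions are handled by contrapositive, taking $\mathcal{M}=\{h_n\}$ and invoking Theorem \ref{theorem1} to pass between membership in $\mathcal{P}_{H_f}(q)$ and the existence of the function $f$. The only difference is cosmetic — your write-up correctly says ``forward'' throughout, whereas the paper's proof twice misprints ``backward pseudo Chebyshev'' where ``forward'' is meant.
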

\begin{proof}
    Suppose that $H$ is not backward pseudo Chebyshev subset of $\mathcal{Q}$. Then, there exist $q\in \mathcal{Q}$ and infinitely many linearly independent elements $\{h_n\}\subseteq \mathcal{P}_{H_f}(q)$. Thus, by Theorem \ref{theorem1}, there exist $f:\mathcal{Q}\to \mathcal{B}$ such that $f(h_n)=d(q,h_n),~ f_{h_n}(H)\subseteq \mathcal{P}$, and $f_{d_f}(H)\subseteq \mathcal{P}$ for all $n\in\natural$.\\

    Next, suppose that there exist $q\in \mathcal{Q}$, infinitely many linearly independent elements $\{h_n\}\subseteq H$ and a function $f:\mathcal{Q}\to \mathcal{B}$ such that $f(h_n)=d(q,h_n), f_{h_n}(H)\subseteq \mathcal{P}$, and $f_{d_f}(H)\subseteq \mathcal{P}$ for all $n\in \natural$. By Theorem \ref{theorem1}, $h_n\in \mathcal{P}_{H_f}(q)$ for all $n\in \natural$ and it implies that $H$ is not backward pseudo Chebyshev subset of $\mathcal{Q}$.
\end{proof}

\begin{theorem}\label{thm:bpqC}
    Let $\mathcal{Q}$ be a real vector space, and let $(\mathcal{Q},d)$ be a quasi-cone metric space, and let $H$ be a non-empty subset of $\mathcal{Q}$. Then $H$ is a backward pseudo Chebyshev subset of $\mathcal{Q}$ if and only if there don't exist $q\in \mathcal{Q}$, infinitely many linearly independent elements $(h_n)\subseteq H$, and a function $f:\mathcal{Q}\to \mathcal{B}$ such that $f(h_n)=d(h_n,q), f_{h_n}(H)\subseteq \mathcal{P}$, and $f_{d_b}(H)\subseteq \mathcal{P}$.
\end{theorem}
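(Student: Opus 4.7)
The plan is to mirror the proof of Theorem \ref{thm:fpqC} almost verbatim, simply swapping the roles of $d(q,h_n)$ and $d(h_n,q)$ and invoking Theorem \ref{theorem2} (together with the implicit use of Lemma \ref{lemma:supp2}) in place of Theorem \ref{theorem1}. Both directions are proved by contrapositive, using Theorem \ref{theorem2} as the bridge between the condition ``$h_n\in\mathcal{P}_{H_b}(q)$ for all $n$'' and the existence of the witnessing function $f$ with $f(h_n)=d(h_n,q)$, $f_{h_n}(H)\subseteq\mathcal{P}$, and $f_{d_b}(H)\subseteq\mathcal{P}$.

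For the necessity (only-if) direction, I would assume that $H$ is not a backward pseudo Chebyshev subset of $\mathcal{Q}$. By the definition of backward pseudo Chebyshev, this yields some $q\in\mathcal{Q}$ such that $\mathcal{P}_{H_b}(q)$ contains infinitely many linearly independent elements $\{h_n\}\subseteq \mathcal{P}_{H_b}(q)\subseteq H$. Setting $\mathcal{M}:=\{h_n:n\in\natural\}\subseteq\mathcal{P}_{H_b}(q)$ and applying Theorem \ref{theorem2}, I obtain a function $f:\mathcal{Q}\to\mathcal{B}$ such that $f(h_n)=d(h_n,q)$, $f_{h_n}(H)\subseteq\mathcal{P}$, and $f_{d_b}(H)\subseteq\mathcal{P}$ for every $n\in\natural$. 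This exhibits exactly the data whose non-existence is claimed in the theorem.

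For the sufficiency (if) direction, I would assume there exist $q\in\mathcal{Q}$, infinitely many linearly independent elements $(h_n)\subseteq H$, and a function $f:\mathcal{Q}\to\mathcal{B}$ with $f(h_n)=d(h_n,q)$, $f_{h_n}(H)\subseteq\mathcal{P}$, and $f_{d_b}(H)\subseteq\mathcal{P}$ for all $n\in\natural$. Then Theorem \ref{theorem2}, applied with $\mathcal{M}=\{h_n:n\in\natural\}$, gives $h_n\in\mathcal{P}_{H_b}(q)$ for every $n$. Consequently $\mathcal{P}_{H_b}(q)$ contains infinitely many linearly independent elements, so $H$ fails to be a backward pseudo Chebyshev subset of $\mathcal{Q}$.

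There is essentially no mathematical obstacle here; the argument is a structural copy of Theorem \ref{thm:fpqC} with the two distance arguments transposed. The only things to be careful about are (i) verifying that Theorem \ref{theorem2} is indeed stated strongly enough to accept an \emph{infinite} $\mathcal{M}$ (it is, since the characterization is ``for all $m_b\in\mathcal{M}$''), and (ii) ensuring that the vector-space hypothesis on $\mathcal{Q}$ is used only where needed, namely to make sense of ``linearly independent'' in the set $\{h_n\}$; this is exactly parallel to the forward case.
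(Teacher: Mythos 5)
Your proposal is correct and follows essentially the same route as the paper, which simply defers to the proof of Theorem \ref{thm:fpqC} with Theorem \ref{theorem2} replacing Theorem \ref{theorem1} and the distance arguments transposed; your explicit use of $\mathcal{M}=\{h_n : n\in\natural\}$ in both contrapositive directions matches the intended argument. Your added remarks on the infinite $\mathcal{M}$ and on where the vector-space hypothesis enters are accurate but not points of divergence.
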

\begin{proof}
Similar to Theorem \ref{thm:fpqC}
\end{proof}

 \bibliographystyle{elsarticle-num} 
 \bibliography{reference}

\end{document}